\newtheorem{theo}{Theorem}[section]
\newtheorem{prop}[theo]{Proposition}
\newtheorem{lemma}[theo]{Lemma}
\newtheorem{conj}[theo]{Conjecture}
\newcommand{\FF}{{\cal F}}
\begin{document}
\date{}

\title{
On the product dimension of clique factors
}

\author{Noga Alon
\thanks
{Department of Mathematics, Princeton University,
Princeton, NJ 08544, USA and
Schools of Mathematics and
Computer Science, Tel Aviv University, Tel Aviv 69978,
Israel.  
Email: {\tt nogaa@tau.ac.il}.  
Research supported in part by
NSF grant DMS-1855464, ISF grant 281/17, GIF grant G-1347-304.6/2016,
and the Simons Foundation.
}
\and
Ryan Alweiss
\thanks
{Department of Mathematics, Princeton University,
Princeton, NJ 08544, USA.
Email: {\tt alweiss@math.princeton.edu}. Research supported by an NSF Graduate Research Fellowship.
}}

\maketitle
\begin{abstract}
The product dimension of a graph $G$ is the minimum possible number
of proper vertex colorings of $G$ so that for every pair $u,v$ of
non-adjacent vertices there is at least one coloring in which $u$ and $v$ have the
same color. What is the product dimension $Q(s,r)$ of the vertex disjoint
union of $r$ cliques, each of size $s$?  
Lov\'asz, Ne\v{s}et\v{r}il and Pultr proved in 1980 that 
for $s=2$ it is $(1+o(1)) \log_2 r$ and raised the problem of 
estimating this function for larger values of $s$.  We show that for every fixed $s$, the answer is still $(1+o(1)) \log_2 r$ 
where the $o(1)$ term tends to $0$ as $r$ tends to infinity, but the problem of determining the asymptotic behavior of $Q(s,r)$ when $s$ and $r$ grow together remains open.  The proof combines linear algebraic tools with the method of Gargano, K\"orner, and Vaccaro on Sperner capacities of directed graphs.
\end{abstract}

\section{Introduction}

The product dimension of a graph $G=(V,E)$ is the minimum possible
cardinality $d$ of a collection of proper vertex colorings of $G$ such
that every pair of nonadjacent vertices have the same color in at
least one of the colorings (and so that any two distinct vertices are colored differently in some coloring). Equivalently, this is the minimum $d$ so
that one can assign to every vertex $v$ a vector 
in $Z^d$, so that two vertices are adjacent if and only
if the corresponding vectors differ in all coordinates (and so that no two distinct vertices are assigned the same vector). If $G$ does not contain two distinct non-adjacent vertices with the same neighborhoods, as will be the case in this paper, we can take the parenthetical distinctness conditions for granted. The product
dimension is also the minimum number of complete graphs so that $G$
is an induced subgraph of their tensor product, 
where the tensor product of graphs $H_1, \ldots ,H_d$ is the 
graph whose vertex set is the cartesian product of the vertex sets of the graphs $H_i$, and two vertices $(u_1,u_2, \ldots ,u_d)$ and $(v_1,v_2, \ldots ,v_d)$ are adjacent iff $u_i$ is adjacent (in $H_i$)  to $v_i$ for all $1 \leq i \leq d$.
Yet another equivalent definition is the minimum number of subgraphs of the
complement $\overline{G}$ of $G$ so that each subgraph is a vertex disjoint 
union of cliques, and every edge of $\overline{G}$ belongs to at least one of the subgraphs (and also every pair of distinct vertices are not adjacent in some subgraph).

For positive integers $s,r \geq 2$ let $K_s(r)$ denote the graph
consisting of $r$ pairwise vertex disjoint copies of the complete
graph $K_s$.  Any two non-adjacent vertices of this graph have different neighborhoods. Let $Q(s,r)$ denote the product dimension
of this graph. Lov\'asz, Ne\v{s}et\v{r}il and Pultr \cite{LNP}
(see also \cite{Al1})
proved that $Q(2,r)=\lceil \log_2 (2r) \rceil$.
The proof of the upper bound  is simple. If $q=\lceil \log_2 (2r) 
\rceil$ then 
$2^q \geq 2r$. Hence one can assign
distinct binary vectors of length $q$ to the $2r$ vertices of $K_2(r)$ 
so that the vectors assigned to each pair of adjacent vertices are
antipodal, i.e. they differ in all coordinates. It is easy to check
that two vertices are adjacent if and only if the corresponding vectors
differ in all coordinates, showing that $Q(2,r) \leq q.$

The lower bound is proved in \cite{LNP} by a linear algebra
argument, and the proof given in \cite{Al1} applies exterior
algebra. There is yet another (similar) short proof that proceeds
by assigning to each vertex of $K_s(r)$ a multilinear polynomial 
in $x_1,x_2, \ldots ,x_q$ that depends on the coloring used, and by showing
that these polynomials are linearly independent. 
As mentioned in the abstract, 
Lov\'asz, Ne\v{s}et\v{r}il and Pultr \cite{LNP} raised the problem
of estimating $Q(s,r)$ for larger values of $s$. 
More recently, Kleinberg and Weinberg considered the same problem, motivated by  the investigation of prophet inequalities for intersection of matroids \cite{KW}. In this paper, we determine
the asymptotic behavior of $Q(s,r)$ for any fixed $s \geq 2$ and large $r$.
\begin{theo}
\label{t11}
For every fixed $s$, $Q(s,r) = (1+o(1)) \log_2 r$, where the 
$o(1)$-term tends to $0$ as $r$ tends to infinity.
\end{theo}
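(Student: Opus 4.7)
The lower bound is immediate: $K_s(r)$ contains $K_2(r)$ as an induced subgraph by picking one edge from each $K_s$, and since product dimension is monotone under induced subgraphs, $Q(s,r) \ge Q(2,r) = \lceil \log_2(2r)\rceil = (1+o(1))\log_2 r$.

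For the upper bound I would begin with a subadditivity relation:
$$Q(s, r_1 r_2) \le Q(s, r_1) + Q(s, r_2).$$
This is proved by the direct product construction---label the cliques of $K_s(r_1 r_2)$ as pairs $(a,b) \in [r_1] \times [r_2]$, apply the $K_s(r_1)$-coloring to the $a$-index in the first $Q(s,r_1)$ coordinates and the $K_s(r_2)$-coloring to the $b$-index in the last $Q(s,r_2)$ coordinates, and a routine case analysis on which of $a, b, j$ agree verifies both properness and coverage of every non-adjacent pair. By Fekete's lemma the limit $\lim_{r\to\infty} Q(s,r)/\log_2 r$ exists and equals $\inf_r Q(s,r)/\log_2 r$, so it is enough to produce, for every $\varepsilon > 0$, a single value $r_0 = r_0(s,\varepsilon)$ with $Q(s, r_0) \le (1+\varepsilon)\log_2 r_0$.

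For that single construction I would use a two-layer coloring. At coordinate $k$, the color of vertex $(i, j)$ is a pair $(b_i(k), \pi_{i,k}(j)) \in \{0,1\} \times [s]$, where $b_i \in \{0,1\}^d$ is a binary label of clique $i$ and $\pi_{i, k} \in S_s$ is a permutation. A non-adjacent pair $(i, j), (i', j')$ is covered at coordinate $k$ iff $b_i(k) = b_{i'}(k)$ and the quotient $\pi_{i',k}^{-1}\pi_{i,k}$ sends $j$ to $j'$. The binary layer is chosen (e.g., via a random or linear binary code) so that every pair of distinct cliques shares a positive fraction of binary-agreement coordinates; on those agreement coordinates, the permutations $\pi_{i,k}$ are drawn from a Gargano--K\"orner--Vaccaro--style construction, namely a large family in $(S_s)^t$ whose pairwise pointwise quotients yield a set of permutations of $[s]$ whose graphs cover $[s]\times[s]$. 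Linear-algebraic tools (character sums over $\mathbb{F}_2^d$, or second-moment arguments) control the concentration of the pairwise binary-agreement counts, so that the bad events ``too few binary agreements'' and ``some $(j,j')$ uncovered'' can both be dispatched by a union bound.

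The main obstacle is the rate of the permutation layer: a naive random choice of $\pi_{i,k}$ wastes a factor of $s$ and only yields $Q(s,r) = \Theta(s\log r)$, not $(1+\varepsilon)\log_2 r$. The substance of the Gargano--K\"orner--Vaccaro Sperner-capacity construction is that it provides permutation codes in $S_s$ whose rate matches the binary rate to leading order, so that combining the two layers does not lose the factor of $s$. Once the single $r_0$ is produced with $Q(s, r_0) \le (1+\varepsilon)\log_2 r_0$, the theorem for all large $r$ follows immediately from the subadditivity established at the start.
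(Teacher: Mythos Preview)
Your lower bound via monotonicity and your subadditivity relation $Q(s,r_1r_2)\le Q(s,r_1)+Q(s,r_2)$ are both correct and both appear in the paper (the latter as equation~(4.1) in the concluding section). But neither the Fekete reduction nor the two-layer binary/permutation scheme is how the paper proceeds, and your two-layer step has a real gap.

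The problem is the interaction between the layers. You require, for each non-adjacent pair $(i,j),(i',j')$, a coordinate $k$ in the \emph{binary-agreement set} $\{k:b_i(k)=b_{i'}(k)\}$ on which the permutation quotient sends $j$ to $j'$. But that agreement set depends on the pair $(i,i')$, so it makes no sense to say the permutations ``on those agreement coordinates'' are drawn from a GKV family --- the family would have to be good when restricted to every one of exponentially many different coordinate subsets simultaneously, which is strictly stronger than what GKV provides. In fact the binary layer is doing nothing useful: if you specialize your permutations to cyclic shifts $\pi_{i,k}(j)=j+v_i(k)$ with $v_i\in Z_s^q$, the coverage condition becomes exactly ``$v_i-v_{i'}$ takes every value in $Z_s$'', and the binary layer can be dropped entirely.

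That specialization is precisely the paper's route. It introduces $Z_s$-\emph{covering families} $\mathcal F\subset Z_s^q$ (every pairwise difference hits every residue), observes in one paragraph (Proposition~1.2) that $|\mathcal F|\ge r$ implies $Q(s,r)\le q$, and then proves $R(s,q)\ge(2-o(1))^q$ directly for all large $q$ --- so no Fekete step is needed. The construction is the GKV Markov-chain argument made explicit: start with ``special balanced words'' (giving $\{\pm1\}$-coverage), use a bipartite-stars lemma to partition balanced words into large classes each related to the special words by a single permutation, and concatenate words under the rule $x_i\in S_j\Rightarrow x_{i+1}\in\alpha S_j$ for a primitive root $\alpha$; looking at the first and last places two concatenated words differ doubles the covered set at each of $O(\log s)$ iterations. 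Your proposal black-boxes exactly this step, which is the entire technical content of the upper bound.
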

The main tool in the proof is the method of Gargano, K\"orner and
Vaccaro in their work on Sperner capacities \cite{GKV}. 
For completeness, and
since we are interested
in the behavior of the $o(1)$-term in the theorem above, we
describe a variant of the method as needed here, in a combinatorial 
way that avoids any application of information theoretic techniques. 
The proof is based on what we call here $Z_s$-covering families of
vectors. 

Let $Z_s$ denote the ring of integers  modulo $s$. For a subset
$A \subset Z_s$ and a vector $v=(v_1,v_2, \ldots ,v_q) \in Z_s^q$, 
we say that $v$ is $A$-covering if for every $a \in A$ there is an
$1 \leq i \leq q$ so that $v_i=a$. The vector $v$ is covering if it
is $Z_s$-covering. A family $\FF \subset Z_s^q$ is $A$-covering if
for every ordered pair of  distinct vectors $u,v \in \FF$, the difference
$u-v$ is $A$-covering. $\FF$  is covering if it is $Z_s$-covering.
Therefore, a family $\FF$ of vectors  in $Z_s^q$ is 
covering if every element of $Z_s$ 
appears in at least one coordinate of 
the  difference between any two distinct vectors in the
family. Let $R(s,q)$ denote the maximum possible cardinality of
a covering family of vectors in $Z_s^q$. The following simple
statement describes the connection between $Q(s,r)$ and $R(s,q)$.
\begin{prop}
\label{p12}
If $R(s,q) \geq r$ then $Q(s,r) \leq q$.
\end{prop}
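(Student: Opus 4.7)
The plan is to use a covering family of size $r$ in $Z_s^q$ to directly exhibit a vertex assignment realizing the product dimension. Let $\FF = \{u_1, \ldots, u_r\} \subset Z_s^q$ be a $Z_s$-covering family, and let the $r$ cliques of $K_s(r)$ be indexed by $i \in \{1,\ldots,r\}$, with vertices in the $i$-th clique labeled $v_{i,a}$ for $a \in Z_s$. I would assign to each vertex $v_{i,a}$ the vector
\[
w_{i,a} \;=\; u_i + a \cdot \mathbf{1} \;\in\; Z_s^q,
\]
where $\mathbf{1} = (1,1,\ldots,1)$, and then verify that two vertices are adjacent in $K_s(r)$ if and only if their assigned vectors differ in all $q$ coordinates.

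For the two cases: if $v_{i,a}$ and $v_{i,b}$ lie in the same clique (so they are adjacent and $a \neq b$), then $w_{i,a} - w_{i,b} = (a-b)\mathbf{1}$, which has the nonzero entry $a-b$ in every coordinate. If instead $v_{i,a}$ and $v_{j,b}$ lie in distinct cliques $i \neq j$ (so they are non-adjacent), then
\[
w_{i,a} - w_{j,b} \;=\; (u_i - u_j) + (a-b)\mathbf{1}.
\]
Since $\FF$ is $Z_s$-covering, the difference $u_i - u_j$ hits every element of $Z_s$, and in particular there is some coordinate $k$ with $(u_i - u_j)_k = b - a$. At that coordinate the two assigned vectors agree, which is exactly the required non-adjacency condition.

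This $q$-coordinate assignment shows that $K_s(r)$ embeds as an induced subgraph of the $q$-fold tensor product of $K_s$ with itself, which by the equivalent definitions given in the introduction means $Q(s,r) \leq q$. There is no real obstacle here: the construction is the natural one suggested by the definition of a covering family, and the shift by $a\mathbf{1}$ is exactly the device that converts "every element of $Z_s$ appears in some coordinate of $u_i - u_j$" into the statement that the shift offset $b - a$ gets cancelled in some coordinate. The only thing to be careful about is the convention that product-dimension vectors are taken in $Z^q$ rather than $Z_s^q$, but since only the equality pattern of coordinates matters, one can identify $Z_s$ with $\{0,1,\ldots,s-1\} \subset Z$.
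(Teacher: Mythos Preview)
Your proof is correct and is essentially the same construction as the paper's, just phrased through a different one of the equivalent definitions of product dimension: the paper covers the complement of $K_s(r)$ by $q$ unions of cliques (one per column), whereas you assign vectors in $Z_s^q$ and check the ``differ in all coordinates'' criterion. Unwinding either description, vertex $v_{i,a}$ receives color $a \pm (u_i)_j$ in the $j$-th coloring, so the two arguments coincide up to an immaterial sign.
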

Note that by definition $R(s,q)=1$ for all $q<s$. 
Our main result about $R(s,q)$ is the following.
\begin{theo}
\label{t13}
\begin{enumerate}
\item
For every $q \geq s \geq 2$, $R(s,q) \leq 2^{q-1}$. Equality
holds for $s=2$.
\item
For every fixed $s$, $R(s,q) \geq (2-o(1))^q$, where the
$o(1)$-term tends to $0$ as $q \to \infty$.
\end{enumerate}
\end{theo}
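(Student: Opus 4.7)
My plan is to treat the upper and lower bounds separately.

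For the upper bound (part 1), the case $s=2$ is a one-line pairing argument: if $u,v\in\FF\subseteq Z_2^q$ are distinct, then $u-v$ must contain both $0$ and $1$, so $u\neq v+\mathbf{1}$; hence $\FF$ picks at most one vector from each antipodal pair, and $|\FF|\le 2^{q-1}$, with equality witnessed by $\{v:v_1=0\}$. For general $s$, I would use a linear-algebraic argument. Attach to each $v\in\FF$ a function
\[
f_v(x)\;=\;\sum_{c\in Z_s}\prod_{i=1}^q h(x_i-v_i-c),
\]
where $h:Z_s\to\mathbb C$ satisfies $h(0)\neq 0$, vanishes at some prescribed $a\neq 0$, and has Fourier transform supported on only two frequencies $k_1,k_2$ (for $s$ prime, the natural choice is $h(x)=\alpha+\beta\omega^{kx}$ with $\alpha+\beta\omega^{ka}=0$). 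The covering hypothesis forces $f_v(u)=0$ for each $u\in\FF\setminus\{v\}$, since for every $c$ there is some coordinate $i$ with $(u-v)_i=c+a$, making one factor vanish. Summation over $c$ localises the Fourier support of each $f_v$ to the set $\{\xi\in\{k_1,k_2\}^q:\sum_i\xi_i\equiv 0\pmod s\}$, whose size is at most $2^{q-1}$ by a roots-of-unity count. Provided $f_v(v)\neq 0$, the $f_v$ are linearly independent in this subspace, yielding $|\FF|\le 2^{q-1}$.

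For the lower bound (part 2), I would invoke the method of Gargano, K\"orner and Vaccaro in its combinatorial form. Fix a subset $T\subseteq Z_s$ of smallest size with $T-T=Z_s$; for $s=3$ one may take $T=\{0,1\}$, and in general such $T$ can be chosen with $|T|=O(\sqrt s)$ by a Sidon-style construction. Work inside the set $\mathcal T\subseteq T^q$ of vectors of a fixed balanced type, so $|\mathcal T|=|T|^q/\mathrm{poly}(q)\ge(2-o(1))^q$. In the concrete case $s=3$, the class $\mathcal T$ is the middle layer of $\{0,1\}^q$, and removing one vector from each complementary pair already produces a covering family of size $\tfrac12\binom{q}{\lfloor q/2\rfloor}=(2-o(1))^q$. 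For general $s$, a concentration estimate shows that a typical pair in $\mathcal T$ has a covering difference, and a random subsample of $(2-o(1))^q$ vectors followed by an alteration/deletion step removes all non-covering pairs while preserving the claimed size.

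The main obstacle is the non-vanishing of $f_v(v)$ in the upper bound: for certain exceptional pairs $(s,q)$ the natural $h$ yields $f_v(v)=0$ and the argument collapses. These degenerate cases would be handled either by perturbing $h$, by directly verifying the bound for small $q$ (starting from the trivial $R(s,q)=1$ for $q<s$), or by a mod-$p$ projection when $s$ admits a prime factor $p<s$, giving $R(s,q)\le R(p,q)$ because nonzero kernel elements are all non-covering.
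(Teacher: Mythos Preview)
Your treatment of the upper bound for $s=2$ and the reduction $R(s,q)\le R(p,q)$ for a prime $p\mid s$ are both correct (the kernel observation is exactly right). However, for prime $s\ge 3$ the Fourier scheme has a genuine gap at the point you flag. With $h(x)=\alpha+\beta\omega^{kx}$ one computes
\[
f_v(v)=\sum_{c\in Z_s}h(-c)^q=\alpha^q\sum_{\zeta^s=1,\ \zeta\neq 1}(1-\zeta)^q
=\alpha^q\cdot s\!\!\sum_{j\equiv 0\ (\mathrm{mod}\ s)}(-1)^j\binom{q}{j},
\]
and this \emph{does} vanish, e.g.\ for $s=3$, $q=3$. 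Changing $a$ or $k$ does not help, since the right-hand side is independent of both, and the freedom in a two-frequency $h$ is too limited to guarantee non-vanishing for all $q\ge s$. The paper sidesteps this entirely: over $Z_p$ it attaches to each $v_i$ \emph{two} multilinear polynomials
\[
P_i(x)=\prod_{j=1}^q(x_j-v_{ij}),\qquad Q_i(x)=\prod_{j=1}^q(x_j-v_{ij}-1),
\]
and checks that $Q_i(v_i)\neq 0$, $P_i(v_i+J)\neq 0$, while all the other $2m-1$ polynomials vanish at each of these points (using only the $\{0,1\}$-covering property). Thus the $2m$ polynomials are linearly independent in the $2^q$-dimensional space of multilinear polynomials, giving $m\le 2^{q-1}$ with no exceptional cases.

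The more serious gap is in your lower bound for $s\ge 4$. Your $s=3$ argument is fine and essentially matches the explicit family the paper records in Section~4. But the ``concentration plus alteration'' plan cannot reach $(2-o(1))^q$. With $|T|=t=\Theta(\sqrt s)$ and $T$ Sidon-like, some element $a\in Z_s$ is represented exactly once in $T-T$, so for independent balanced $u,v\in T^q$ one has
\[
\Pr\bigl[u-v\text{ misses }a\bigr]\ \ge\ \bigl(1-\tfrac{1}{t^2}\bigr)^{q}\ \ge\ e^{-O(q/s)}.
\]
If you subsample $N$ vectors and delete one endpoint of every non-covering pair, the number of deletions is $\Theta(N^2 e^{-O(q/s)})$, and keeping a positive fraction forces $N\le e^{O(q/s)}=(1+o_s(1))^q$, far below $(2-o(1))^q$. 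The GKV method you cite is \emph{not} a sampling/deletion argument; it is an iterative concatenation (Markov chain) construction, and this is what the paper carries out. One first builds $(2-o(1))^{q_0}$ ``special balanced'' words whose pairwise differences cover $\{\pm 1\}$; using a biregular bipartite graph between balanced words and permutations together with a Hall-type star decomposition (the paper's Lemma~2.1), one partitions all balanced words into large blocks $S_j$, each a permuted copy of the special family. Concatenations $x_1x_2\cdots x_n$ with the rule $x_i\in S_j\Rightarrow x_{i+1}\in\alpha S_j$ (for a primitive root $\alpha$) then cover $\{\pm 1,\pm\alpha\}$, and iterating $O(\log s)$ times covers $Z_s^\ast$; a final constant coordinate handles $0$. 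This doubling of the covered set per iteration is the mechanism that gets to $(2-o(1))^q$, and it has no counterpart in your proposal.
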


The rest of this paper is organized as follows. Section 2 contains
the proof of Proposition \ref{p12} and that of a simple combinatorial lemma.
In Section 3 we present the proof of Theorem \ref{t13} and note that
in view of Proposition \ref{p12}
it implies Theorem \ref{t11}. The proof supplies
better estimates for prime values of $s$, and we thus first present
the proof for this special case (which suffices to deduce the assertion of
Theorem \ref{t11} for every $s$) and then 
describe briefly the 
proof for general $s$. The final Section 4 contains some 
concluding remarks and open problems, including some (modest) estimates for
$R(s,q)$ when $q$ is not much larger than $s$.

To simplify the presentation we omit, throughout the paper, all 
floor and ceiling signs whenever these are not crucial. Thus, we ignore all divisibility issues.

\section{Preliminaries}

We first prove Proposition \ref{p12}.  If $R(s,q) \ge r$, then there 
exists a matrix of elements in $Z_s$ with $r$ rows and $q$ columns so 
that the difference of any two rows is covering.  We will use the $q$ 
columns of this matrix to find $q$ graphs, each being 
a disjoint union of cliques, that cover the complement of $K_s(r)$.  This complement is a complete multipartite graph with $r$ parts of size $s$.  Label the vertices of this graph with elements of $Z_s$ so that in each part all labels are used exactly once. We will associate each row of the matrix to a part and each column to a vertex disjoint union of cliques.  
For a column $(a_1, \cdots, a_r)^T$, consider the following graph.  For each $0 \le k<s$ take the $k+a_i$th vertex (taken modulo $s$) from the $i$th part of size $s$ and take the union of the $s$ cliques obtained as $k$ ranges between $0$ and $s-1$.  This is clearly a union of $s$ vertex disjoint cliques.  
Now, suppose we have some two vertices of the graph we are trying to cover in different parts, say the $\ell+m$th vertex of part $i$ 
and the $\ell$th vertex of part $j$ for some $1 \le i<j \le q$ 
and some $\ell, m \in Z_s$.  Then the difference of the $i$th and $j$th rows contains an $m$ in some column $(a_1, \cdots, a_r)^T$, so that $a_i-a_j \equiv m$ in $Z_s$, and then the disjoint union of cliques corresponding to this column will cover our desired edge.

One can also phrase the proof using the proper coloring definition of $Q(s,r)$.  Say we are given a matrix of $r$ vectors $\{v_1, \cdots, v_r\}$ over $Z_s^q$ which are a $Z_s$-covering family.  We can associate the vector $v_i+(j, \cdots, j)$ to the $j$th vertex in the $i$th clique of $K_s(r).$  These vectors are all distinct for different vertices, because if $v_i+(j, \cdots, j)=v_{i'}+(j', \cdots, j')$, then $v_i-v_{i'}$ is not covering, so $i=i'$ and $j=j'$.  Now, we define $q$ colorings of $K_s(r)$ so that if a vertex $x$ is associated to $(c_1, \cdots, c_q)$, it is colored with $c_k$ in the $k$th coloring.  These colorings are proper, because if $x$ and $y$ are the $j$th and $j'$th vertex in the $i$th clique for $j \neq j'$, then their associated vectors will have a difference $(j-j', \cdots, j-j') \neq \widehat{0}$.  Now, say we are given two distinct non-adjacent vertices, say the $j$th vertex in the $i$th clique and the $j'$th vertex in the $i'$th clique, where $i \neq i'$.  Then $v_i+(j, \cdots, j)$ and $v_{i'}+(j', \cdots, j')$ will share a coordinate; $v_i-v_{i'}$ is covering and thus will be $j'-j$ in some coordinate.

Next, we need the following simple lemma.
\begin{lemma}
\label{l21}
Let $H$ be a bipartite graph with classes of vertices $A_1$, $A_2$ where
$|A_1|=n_1$, $|A_2|=n_2$, each vertex of $A_1$ has degree $d_1$, and each vertex of $A_2$ has degree $d_2$. Furthermore 
suppose that $d_2 \ge \log(2n_2)$.  Then there is a union of 
vertex-disjoint
stars with centers in $A_1$, each star having at least
$\frac{d_1}{4 \log (2n_2)}$ leaves, such that all vertices of
$A_2$ are leaves.
\end{lemma}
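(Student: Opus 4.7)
My plan is a probabilistic argument with an alteration/greedy clean-up step. Set $L = \log(2n_2)$, $k = d_1/(4L)$, and $p = L/d_2$; note that $p \leq 1$ by the hypothesis $d_2 \geq L$. Include each $u \in A_1$ in a random set $S$ independently with probability $p$, and for each $v \in A_2$ with $N(v) \cap S \neq \emptyset$ assign $v$ to a uniformly random $f(v) \in N(v) \cap S$.

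The first estimate is on coverage: for every $v \in A_2$,
\[
\Pr[N(v) \cap S = \emptyset] \leq (1-p)^{d_2} \leq e^{-L} = \frac{1}{2n_2},
\]
so the expected number of uncovered vertices is at most $1/2$. The second estimate bounds star sizes: writing $T_u = |f^{-1}(u)|$, a direct computation using $\mathbb{E}[1/(1+\mathrm{Bin}(d_2-1,p))] = (1-(1-p)^{d_2})/(pd_2)$ gives
\[
\mathbb{E}[T_u \mid u \in S] = d_1 \cdot \frac{1-(1-p)^{d_2}}{pd_2} \geq \frac{d_1}{2L} = 2k.
\]
Conditional on $S$, the random variables $f(v)$ are independent across $v$, so $T_u$ is a sum of independent indicators and concentrates around its conditional mean by a Chernoff bound.

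To close the argument, I would select a realization of $(S,f)$ in which every $v \in A_2$ is covered and every used $u \in S$ has $T_u \geq k$; the stars $(\{u\} \cup f^{-1}(u))_{u \,:\, T_u > 0}$ then give the desired star forest. When a direct union bound over $\{T_u < k\}$ is too weak, I would instead use a greedy alteration step: iteratively pick $u \in S$ with at least $k$ unassigned neighbors and form a star of size $k$ around it; by double counting, the residual set $R$ of unassigned vertices satisfies $|R| < |S'|k \leq |S|k$, where $S'$ is the unused part of $S$, because any $v \in R$ must have $N(v) \cap S_{\mathrm{used}} = \emptyset$ (a used center adjacent to $v$ would already have grabbed $v$). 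Using $n_1 d_1 = n_2 d_2$ gives $\mathbb{E}[|S|k] = pn_1 k = n_2/4$. Finally I iterate on the subgraph $H[A_1 \setminus S_{\mathrm{used}}, R]$, in which every $v \in R$ retains its full degree $d_2$.

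The main technical obstacle is forcing the residual $R$ to become empty: a single round yields only $\mathbb{E}[|R|] \leq n_2/4 + 1/2$, which does not shrink geometrically under naive iteration. Resolving this requires scaling the sampling probability with $\log(2|R|)/d_2$ in subsequent rounds, and carefully accounting for the total number of $A_1$-vertices consumed; the constant $4$ in the star-size bound, together with the hypothesis $d_2 \geq \log(2n_2)$, provides exactly the slack needed for this iteration to converge within $O(\log n_2)$ rounds without exhausting the supply of centers.
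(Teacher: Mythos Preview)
Your proposal has a genuine gap: the iteration sketched in the last two paragraphs does not close, and you essentially concede this by calling it ``the main technical obstacle'' without actually resolving it. Concretely, in round $i$ your residual bound is $|R_i| < |S_i'|\,k + (\text{uncovered})$, and you control $|S_i'|$ only through $|S_i|$. But
\[
\mathbb{E}[\,|S_i|\,k\,] \;\le\; p_i\, n_1\, k \;=\; \frac{\log(2|R_{i-1}|)}{d_2}\cdot n_1 \cdot \frac{d_1}{4L}
\;=\; \frac{n_2 \log(2|R_{i-1}|)}{4L},
\]
and after the very first round (where $|R_1|\approx n_2/4$) this quantity is essentially $|R_1|$ itself, since $\log(2|R_1|)/L \to 1$. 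So the residual does not shrink geometrically, and the assertion that ``the constant $4$ \dots\ provides exactly the slack needed'' is unsupported. There is also an internal inconsistency in the greedy step: ``form a star of size $k$'' conflicts with ``a used center adjacent to $v$ would already have grabbed $v$'', since the latter requires each used center to absorb \emph{all} of its currently unassigned neighbours, not just $k$ of them. Finally, the Chernoff route you mention first is already problematic: conditional on $S$, the mean of $T_u$ is $\sum_{v\in N(u)} 1/|N(v)\cap S|$, which is $\ge 2k$ only after averaging over $S$, not for every realisation of $S$, so a pointwise Chernoff bound with mean $2k$ is not available.

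The paper sidesteps all of this with one extra observation you never exploit. Sampling $S$ with the same probability $p=\log(2n_2)/d_2$, a union bound shows that with positive probability every vertex of $A_2$ has not only at least one but also \emph{at most} $4\log(2n_2)$ neighbours in $S$. That upper bound is the whole point: it forces every $S'\subseteq S$ to satisfy $|N(S')|\ge \frac{d_1}{4\log(2n_2)}\,|S'|$, so the expansion form of Hall's theorem delivers, in a single shot, a family of vertex-disjoint stars centred exactly at $S$, each with at least $d_1/(4\log(2n_2))$ leaves. The remaining $A_2$-vertices each have a neighbour in $S$ and are simply appended to an existing star. No concentration for $T_u$, no alteration, no iteration.
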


\begin{proof}
	Define a random subset $S$ of $A_1$ by choosing each vertex of $A_1$ to be in $S$ with probability $p=\frac{\log(2n_2)}{d_2}$ uniformly and independently.  We claim that with positive probability, each vertex of $A_2$ has between $1$ and $4\log(2n_2)$ neighbors in $S$.  
The proof is a simple union bound; a fixed vertex $v \in A_2$ has probability $$(1-p)^{d_2}< e^{-pd_2}=\frac{1}{2n_2}$$ of having no neighbors in $S$, and probability at most $$\dbinom{d_2}{4\log(2n_2)}p^{4\log(2n_2)} \le \left( \frac{ped_2}{4\log(2n_2)} \right)^{4\log(2n_2)}=\left(\frac{e}{4}\right)^{4\log(2n_2)}<\frac{1}{2n_2}$$ of 
having more than $4\log(2n_2)$ neighbors, proving the claim.  
Fix an $S$ with this property.
	
We now finish the proof of the lemma by an application of Hall's theorem.  
For all $S' \subseteq S$, let $N(S')$ denote the set of all neighbors of
$S'$ and let $e(S',A_2)$ denote the number of edges from $S'$ to
$A_2$. Then
$|N(S')| \ge \frac{e(S',A_2)}{4\log(2n_2)}
=\frac{d_1}{4\log(2n_2)}|S'|$.  
Hence every subset of $S$ 
expands by a factor of at least 
$\frac{d_1}{4\log(2n_2)}$.  
Thus by Hall's theorem, there is a 
union of disjoint stars whose centers are exactly 
the vertices of $S$, each having at least $\frac{d_1}{4\log(2n_2)}$ leaves.  
Every remaining vertex of $A_2$ is adjacent to some vertex in $S$, 
so we can simply add it to an existing star.  \end{proof}

\section{Covering families}

\subsection{The upper bound}

The following proposition implies the assertion of Theorem \ref{t13},
part 1.
\begin{prop}
\label{p31}
Fix $s \geq 2$, and let $\FF \subset Z_s^q$ be a $\{0,1\}$-covering
family of vectors. Then $|\FF| \leq 2^{q-1}$. For $s=2$ equality holds.
\end{prop}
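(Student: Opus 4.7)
The plan is to combine a doubling trick with the polynomial method. First I would note that $\FF$ and its coordinatewise shift $\FF + \mathbf{1}$ (with $\mathbf{1} = (1, \ldots, 1) \in Z_s^q$) are disjoint, because if both $v$ and $v - \mathbf{1}$ lay in $\FF$ then their difference $\mathbf{1}$ would have no $0$ entry, contradicting $\{0, 1\}$-covering. So it suffices to exhibit $2|\FF|$ linearly independent multilinear polynomials in $\mathbb{Q}[x_1, \ldots, x_q]$ indexed by the enlarged family $\FF \cup (\FF + \mathbf{1})$, since multilinear polynomials form a $2^q$-dimensional vector space.

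For each $u \in \FF \cup (\FF + \mathbf{1})$ I would define
$$h_u(x) \;=\; \prod_{i=1}^q \bigl(x_i - \pi(u)_i\bigr),$$
where $\pi(u) = u + \mathbf{1}$ if $u \in \FF$ and $\pi(u) = u - \mathbf{1}$ if $u \in \FF + \mathbf{1}$, with coordinates viewed as integers in $\{0, 1, \ldots, s - 1\}$ (so $\pi$ swaps the two halves of the disjoint union). A direct check shows $u_i \neq \pi(u)_i$ at every coordinate, so $h_u(u) \neq 0$, and $h_u(u') = 0$ precisely when $u' - \pi(u)$ has a coordinate equal to $0$ in $Z_s$. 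The critical step is to verify this vanishing for every $u' \neq u$ in the enlarged family: a case analysis according to which halves contain $u$ and $u'$ reveals that $u' - \pi(u)$ equals either $u' - u \mp \mathbf{1}$ (when $u, u'$ lie in the same half, in which case covering forces $u' - u$ to contain $\pm 1$, producing the required $0$) or a difference of two elements of $\FF$ (when they lie in different halves, in which case covering directly supplies a $0$). Linear independence of the $h_u$'s then follows by evaluation, giving $2|\FF| \leq 2^q$ and hence $|\FF| \leq 2^{q-1}$.

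The key insight — and the reason the naive choice $\prod_i(x_i - v_i + 1)$ indexed only by $v \in \FF$ yields the weaker bound $|\FF| \leq 2^q$ — is that doubling the family via the shift by $\mathbf{1}$ and referring each polynomial to the coordinates of its \emph{paired} element lets both components of the $\{0, 1\}$-covering condition contribute to the argument simultaneously: one part feeds the same-half cases and the other feeds the cross-half cases. For the equality case $s = 2$, the family $\FF = \{v \in Z_2^q : v_1 = 0\}$ has size $2^{q-1}$ and is clearly $\{0, 1\}$-covering (every pair agrees in the first coordinate and differs elsewhere), so the bound is tight.
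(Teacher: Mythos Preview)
Your proof is correct and is essentially the paper's own argument, repackaged: where the paper assigns to each $v_i\in\FF$ the pair of multilinear polynomials $P_i=\prod_j(x_j-v_{ij})$ and $Q_i=\prod_j(x_j-v_{ij}-1)$ and evaluates at the points $v_i$ and $v_i+J$, you equivalently index by the doubled family $\FF\cup(\FF+\mathbf{1})$ with $h_u=\prod_j(x_j-\pi(u)_j)$ and evaluate at $u$. The only cosmetic difference is that the paper first reduces modulo a prime divisor $p\mid s$ and works over $Z_p$, whereas you lift to integer representatives and work over $\mathbb{Q}$; both yield the same $2|\FF|\le 2^q$ conclusion.
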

\begin{proof}
Put $m =|\FF|$. Let $p$ be a prime divisor of $s$ and consider the 
vectors in $\FF$ as vectors in $Z_p^q$ by reducing their coordinates modulo
$p$. Note that these vectors form a $\{0,1\}$-covering family over $Z_p$, and so are distinct. Let
$v_i=(v_{i1},v_{i2}, \ldots ,v_{iq})$, $(1 \leq i 
\leq m)$ be the vectors in $\FF$ (considered as elements of 
$Z_p^q$). 

For each $1\leq i \leq m$ define two polynomials $P_i,Q_i$ in
the variables $x_1,x_2, \ldots ,x_q$ over $Z_p$ as follows.
$$
P_i(x_1, \ldots ,x_q)=\prod_{j=1}^q (x_j -v_{ij}),~~
Q_i(x_1, \ldots ,x_q)=\prod_{j=1}^q (x_j -v_{ij}-1).
$$
It is not difficult to check that for every $i$, $Q_i(v_i) \neq 0$
and $P_i(v_i)=0$. In addition, for every 
$1 \leq i \neq i' \leq m$, $P_{i'}(v_i) =0$ (as there is a coordinate
$j$ for which $v_{ij}-v_{i'j}=0$) and $Q_{i'}(v_i)=0$ (as there
is a $j$ so that $v_{ij}-v_{i'j}=1$).

Similar reasoning gives that for the vectors $v_i+J$, where $J$ is the all
$1$-vector of length $q$, $P_i(v_i+J) \neq 0$, $Q_i(v_i+J)=0$, and
for every $i' \neq i$, 
$P_ {i'}(v_i+J) =Q_{i'} (v_i +J)=0$. Therefore, for each member of
the collection
of $2m$ polynomials $\{P_i,Q_i: 1 \leq i \leq m\}$ there is an 
assignment of values of the variables in which this member is nonzero
and all others vanish. This easily implies that the set of $2m$ polynomials
$P_i,Q_i$ is linearly independent in $Z_p$, 
and as each of its members lies in the space of multilinear
polynomials with the $m$ variables $x_j$, the number, $2m$,
of these polynomials is at most the dimension of this space which is
$2^q$. It follows that $|\FF| =m \leq 2^{q-1}$, as needed. For $s=2$
the family of all binary vectors in which the first coordinate is $1$
is $\{0,1\}$-covering, showing that $R(2,q)  \geq 2^{q-1}$ and 
completing the proof. $\Box$

\end{proof}

\subsection{Prime $s$} For prime $s \ge 3$, we will prove that $R(s,q) 
\ge (2-o(1))^q$ where the $o(1)$-term tends to $0$ as $q \to \infty$.  
The crux of the proof is a Markov chain argument from \cite{GKV}, which we will iterate $O(\log s)$ times.

A \emph{balanced word} over $Z_s^q$ is a word containing the letters $1$ through $s-1$ an equal number of times.  A \emph{special balanced word} is a balanced word such that the first $\frac{q}{(s-1)/2}$ letters are $1$ and $2$ in some order, the next $\frac{q}{(s-1)/2}$ letters are $3$ and $4$ in some order, and so on.  
Construct a bipartite graph between the set $A_2$ of balanced words 
$w$ over $Z_s^q$ and the set $A_1$ of permutations $\pi$ on $q$ elements defined as follows: $w$ and $\pi$ are adjacent if and only if $\pi(w)$ is a special balanced word.  
By symmetry all $n_1=q!$ vertices in $A_1$ have the same degree $d_1$, and all vertices in $A_2$ have the same degree $d_2$.  We have $$n_2=|A_2|=\dbinom{q}{q/(s-1), \cdots, q/(s-1)} \le (s-1)^q$$ and $d_1$ is the total number of special balanced words, 
so $$d_1=\dbinom{2q/(s-1)}{q/(s-1)}^{(s-1)/2}>\frac{2^q}{q^{s/2}}.$$  Furthermore, $d_2=\frac{n_1}{n_2}d_1=\left((\frac{q}{s-1})!\right)^{s-1}d_1 \ge \log(2n_2)$ so by Lemma \ref{l21} there exists a way to map balanced words to some set $T$ of permutations $\pi$ of $q$ elements, so that each balanced word is 
associated to exactly one permutation, and each permutation in $T$ is associated to at least $\frac{d_1}{4\log(2n_2)}>\frac{2^q/q^{s/2}}{4\log(2(s-1)^q)}>\frac{2^q}{q^s}$ balanced words.  
Thus, we can partition the balanced words into sets $S_1, S_2, \cdots$ so that for each $S_i$ we have $|S_i|>\frac{2^q}{q^s}$ and for all $i$ there exists $\pi_i$ so that $\pi_i(s_i)$ is a special balanced word for all $s_i \in S_i$.

Given all of the special balanced words of length $q$, any two of them have a difference vector which covers $\{\pm 1\}$.  The idea of the proof will be to amplify this set $\{-1,1\}$, first to $\{-\alpha,-1,1,\alpha\}$ for a primitive root $\alpha$ modulo $s$, and after $r$ steps to $\{\pm \alpha^b\}$ for $0 \le b<2^r$.  At each stage, the number of vectors will be $(2-o(1))^L$ where $L$ is the length of the vectors.  Thus after $O(\log s)$ steps we will have a set of vectors that is $Z_s^{*}$ covering.  We can then add an extra coordinate of $0$ to all of the vectors to make them $Z_s$-covering.

We describe the first step of this iteration in detail.  Fix a primitive root $\alpha$ modulo $s$, which will be constant throughout the steps.  Also fix $n=\frac{100}{\epsilon}(\log(s))^2$, which will again be constant throughout the steps.  
Initially we set $q=q_0=100(s^2-1)$, ensuring it is divisible by
$s-1$.  
We will construct words of length $q_1=qn$ by stringing together balanced words of length $q$ in a specific way.  If $x_i \in S_j$, then force $x_{i+1} \in \alpha S_j$; here we mean that if we take $x_{i+1}$ and multiply its letters by $\alpha^{-1}$ pointwise, the result will be in $S_j$.  
Consider all vectors of length $qn$ constructed according to this rule, by concatenating $n$ balanced words of length $q$ in this way.  
There are more than $n_2(\frac{2^q}{q^s})^{n-1}$ such words, 
because at each stage other than the first we must pick 
$x_{i+1}$ so that $x_i \in \alpha S_j$ for some $j$, and thus there 
are more than $\frac{2^q}{q^s}$ choices for $x_{i+1}$. 
We will make $x_1, x_n$ identical over all such words; 
this costs us a factor of $n_2^2$ and thus we now 
have more than $\frac{1}{n_2}(\frac{2^q}{q^s})^{n-1}$ such words.  
Using that $n_2<s^q$, we can find more than $\frac{2^{q(n-1)}}{q^{s(n-1)}s^q}=(2-o(1))^{qn}$ words of length $qn$ of the form $x_1x_2 \cdots x_n$ so that $x_1, x_n$ are fixed, where $o(1)$ is as $\epsilon$ goes to $0$ and $n$ goes to infinity.  We now note that for any two different words of this form $x_1 \cdots x_n$ and $x'_1 \cdots x'_n$, 
with $x_1=x'_1, x_n=x'_n$, there is some minimal $i \ge 2$ so that $x_i \neq x'_i$.  But then there is some $k$ so that $x_{i-1}=x'_{i-1} \in S_k$, and that means $x_i, x'_i \in \alpha S_k$.  
Because $x_i \neq x'_i$, we have $\alpha^{-1}x_i \neq \alpha^{-1}x'_i$, and both $\alpha^{-1}x_i$  and $\alpha^{-1}x'_i$ are in $S_k$.  It follows that $\alpha^{-1}x_i-\alpha^{-1}x'_i$ covers $\{\pm 1\}$ and so $x_i-x'_i$ covers $\{\pm \alpha\}$.  
Similarly, there is some maximal $i<n$ so that $x_i \neq x'_i$.  Then $x_{i+1}=x'_{i+1} \in \alpha S_j$ for some $j$, so $x_i \in S_j$, $x'_i \in S_j$.  As $x_i \neq x'_i$, it follows that $x_i-x'_i$ must cover $\{\pm 1\}$ as coordinates.  
When $s=5$, setting $\alpha=2$ and applying this construction already 
gives a covering family for $\mathbb{Z}_s^{*}=\mathbb{Z}_5^*$ 
with $(2-o_N(1))^N$ vectors of length $N$.  In the general case we iterate this argument to find $(2-o_N(1))^N$ vectors of length $N$, so that after the $r$th iteration the vectors we get cover $\{\pm \alpha^b\}$ for all $0 \le b<2^r$.  We describe how to do this inductively.  

After $r$ iterations, we find for some $q=q_r$ (that depends on $s$) 
a family of $M^q=(2-o_q(1))^q$ (balanced) vectors over $Z_s^q$ which covers $\pm \alpha^b$ for all $0 \le b<2^r$.  We call these vectors $y_1, \cdots, y_{M^q}$ and let $Y=\{y_1, \cdots, y_{M^q}\}$.  We repeat the above argument.  
The $y_i$ play the role of the special balanced words.  
Again we construct a bipartite graph.  
On one side there is the set $A_2$ of all balanced words of length $q$ and on the 
other side there is $A_1$, permutations of $q$ elements.  
We have $\pi$ is adjacent to a (balanced) vector $y$ if and only 
if $\pi(y) \in Y$.  Again $n_2=|A_2| \le (s-1)^q$ but now $d_1=M^q$.  
It can easily be verified that $d_2 \ge \log(2n_2)$.  Thus by Lemma \ref{l21} the balanced words of length $q$ will be split into sets $S_1, S_2, \cdots$ so that all $S_i$ satisfy $|S_i| \ge \frac{d_1}{4\log(2n_2)} \ge \frac{M^q}{4+4q\log(s-1)}=(2-o_q(1))^q$, and furthermore for each $S_i$, there exists a permutation $\pi_i$ so that for all $s_i \in S_i$, $\pi_i(S_i) \in Y$.  Now we will again construct a Markov chain.  
Consider all words of length $qn=q_{r+1}$ consisting of $n$ balanced words of length $q$ of the form $x_1x_2 \cdots x_n$, so that if $x_i \in S_j$, then $x_{i+1} \in \alpha^{2^r}S_j$.  If we have two such words $x_1x_2 \cdots x_n$ and $x'_1x'_2 \cdots x'_n$ so that $x_1=x'_1$, $x_n=x'_n$, let $i>1$ be minimal so that $x_i \neq x'_i$.  Then if $x_{i-1}=x'_{i-1} \in S_j$, $x_i, x'_i \in \alpha^{2^r}S_j$.  
This means that $x_i-x'_i$ covers $\{\pm \alpha^b\}$ modulo $s$ for $2^r \le b<2^{r+1}$.  Now, if we let $x_i$, $i<n$, be maximal so that $x_i \neq x'_i$, then $x_{i+1}=x'_{i+1} \in \alpha^{2^r}S_j$ for some $j$ and so $x_i, x'_i$ are not equal but are both in $S_j$.  
This means $x_i-x'_i$ covers $\{\pm \alpha^b\}$ modulo $s$ 
for $0 \le b<2^r$.  So indeed the family covers $\{\pm \alpha^b\}$ 
modulo $s$ 
for $0 \le b<2^{r+1}$, as long as $x_1$ and $x_n$ are fixed over the family.  We can always find 
$$
\frac{\min_{i}|S_i|^{n-1}}{\dbinom{q}{q/(s-1), \cdots, q/(s-1)}} 
\ge \frac{(2-o(1))^{qn}}{(s-1)^q}=(2-o(1))^{qn}
$$ 
such words if $n$ is large enough, where the $o(1)$ terms tend to $0$ 
as $q \to \infty$ and $n$ is sufficiently large.
We will soon see that our choice of $n=\frac{100}{\epsilon}(\log(s))^2$ 
is sufficient.  Iterating the argument $ \log_2(s) $ 
times allows us to find $(2-o(1))^q$ vectors of some length $q$ which cover $Z_s^{*}$.  
Adding a single coordinate where all vectors are $0$ gives us vectors of 
length $q+1$ that cover $Z_s$, without changing the asymptotic analysis.

With care, we can extract quantitative bounds.  We assume $\epsilon>0$ is a fixed constant and show that we only require $q=\left(\frac{O(1)}{\epsilon}(\log(s))^2\right)^{\log(s)}$ to have a $(2-\epsilon)^q$ size covering system over $Z_s^q$ for large $s$.  
Say that after $r$ iterations, we have $M^q=M_r^q$ vectors of length 
$q=q_r$ for some $M \le 2$ (which is nearly $2$).  
Then $\min_{i}S_i \ge \frac{M^q}{5q\log(s)}$  and thus we find at least $$\frac{\min_{i}|S_i|^{n-1}}{\dbinom{q}{q/(s-1), \cdots, q/(s-1)}} \ge \frac{M^{qn}}{M^q(5q\log(s))^ns^q} \ge \frac{M^{qn}}{(2s)^q(5q\log(s))^n}$$ $$=\left( \frac{M}{(2s)^{1/n}q^{1/q}(5\log(s))^{1/q}} \right)^{qn}=M_{r+1}^{q_{r+1}}$$ vectors of length $qn=q_{r+1}$.  
Recall that $n=\frac{100}{\epsilon}(\log(s))^2$ and $q_0=(1+o(1))100s^2$, where $o(1)$ is as $s$ goes to infinity, 
so when we iterate the Markov chain argument $\log_2(s)$ times, we lose a factor of at most 
$$
\frac{M_0}{M_{\log_2(s)}} \le (2s)^{2\log(s)/n}
\left(\prod_{j=0}^{\infty}(q_0n^j)^{1/q_0n^j}\right)(5\log(s))^{1/q_0}
(5\log(s))^{2\log(s)/n}
$$ 
$$
\le (10s\log(s))^{2\log(s)/n}2^{10\log(s)/s^2}(5\log(s))^{1/100s^2}.
$$ 
In the last inequality here
the bound on the second term holds because the relevant infinite product  
is at most $(q_0^{1/q_0})^{100}<2^{10 \log s/s^2}$.
For large $s$ this is easily seen to be smaller than, say,
$1+\epsilon/4$. Since for large $s$, $M_0>2-\epsilon/2$, for 
$s>s_0(\epsilon)$ we get $M_{\log_2(s)} >2-\epsilon$.
Thus at the end we have 
$q=100s^2\left(\frac{O(1)}{\epsilon}(\log(s))^2\right)^{\log_2(s)}$ 
and at least $(2-\epsilon)^q$ vectors of length $q$ which cover $Z_s$.  
One can easily modify the argument to work for any larger $q$, or simply 
use the super-multiplicative property of $R(s,q)$ (see the 
beginning of Section 4)
to conclude, 
taking $\epsilon=1/\log s$,  that for every large $s$ and for all
$q>s^{(3+o(1)) \log \log s}$, $R(s,q) > (2-\frac{1}{\log s})^q$.
This completes the proof of Theorem \ref{t13}, part 2, for prime $s$.

\subsection{General $s$}

Given an arbitrary fixed integer $s>2$, we now show how to find $(2-o(1))^q$ vectors in $Z_s^q$ which form a $Z_s$-covering family; this shows 
$R(s,q) \ge (2-o(1))^q$ even for composite $s$.  
The general strategy is similar to our strategy in the previous section, except now we work over $Z$, and the set we are covering does not grow 
so quickly. As before, we are not concerned with the vectors 
covering $0$, because we can simply add an extra 
coordinate to deal with it.   Since the argument is very similar to
the one described in the previous subsection, we only provide a brief
description omitting some of the formal details.

We prove the stronger statement that for any fixed $s$, we can find a $(2-o_q(1))^q$ size family over $Z$ that covers $[-s,s]$, i.e. the difference of any two vectors contains all integers between $-s$ and $s$ as coordinates.  Let $S$ be the least common multiple of the first $s$ positive integers.  We will assume without loss of generality that $2S \mid q$. 

At the first step of our iteration, we consider vectors over $Z^q$ 
with an equal number of each element of $[2S]$ as coordinates so that the first $\frac{q}{S}$ coordinates are $1$s and $2$s in some order, 
the next $\frac{q}{S}$ are an equal number of $3$s and $4$s, and so on.  
We can find $(2-o_q(1))^q$ of these and they cover $\{ \pm 1\}$.  Furthermore, they have the property that for any ordered pair of these vectors, there is a coordinate in which the first has an even integer $2k$ and the second has the odd integer $2k-1$ for some $1 \le k \le S$.  This property is crucial and maintained throughout our iterations.

Now we describe the $m$th step of our iteration, for $2 \le m \le s$.  Define a bijection $f=f_m$ on $[2S]$ so that for all integers $1 \le k \le S$, $f(2k)=f(2k-1)+m$.  
We can do this for instance by setting $f(1)=1, f(2)=m+1, f(3)=2, f(4)=m+2, \cdots, f(2m-1)=m, f(2m)=2m$ and then set $f(x)=f(x-2m)+2m$ for $x>2m$ as long as $x \le 2S$.

We then apply the same Markov chain argument as before, defining 
sets $S_i$ and constructing words $x_1x_2 \cdots$ 
starting and ending at the same vectors.  
Now, however, when $x_i$ is in some set $S_j$, instead of 
demanding $x_{i+1} \in \alpha S_j$ we require
$x_{i+1} \in f(S_j)$, meaning that if we apply $f^{-1}$ to each coordinate of $x_{i+1}$, the result will be in $S_j$.  
Looking at the first place where two vectors differ gives us a difference of $\pm m$.  Looking at the last place where they differ shows that the crucial property is preserved, and also that the differences $\pm 1, \cdots, \pm (m-1)$ are retained.  

Thus after $s=O(1)$ iterations, this algorithm produces $(2-o_q(1))^q$ 
vectors of length $q$ which cover all of $[-s,s]$, after we add 
an extra coordinate to deal with covering $0$. This completes the proof
of Theorem \ref{t13}. 

\section{Concluding remarks and open problems} 

A natural open problem is to study the functions 
$R(s,q)$ and $Q(s,r)$ in general.  There are several
simple properties that $R(s,q)$ satisfies.  We know that $R(s,q)$ is 
(weakly) increasing in $q$, because to create $r$ covering 
vectors for $Z_s$ of length $q'>q$, we can take $r$ vectors for $Z_s$ of length $q$ and pad them with $q'-q$ zeroes at the end.  
Furthermore, we know that $R(s,q)$ is super-multiplicative, i.e. $R(s,q_1+q_2) \ge R(s,q_1)R(s,q_2)$, 
because if $m=R(s,q_1)$, $n=R(s,q_2)$ then we can find 
vectors $v_1, \cdots, v_m$ of length $q_1$ and $w_1, \cdots, w_n$ of 
length $q_2$ that form covering families. The $mn$ vectors $v_iw_j$ 
obtained by concatenating $v_i$ and $w_j$ 
are clearly a covering family of length $q_1+q_2$.

For $q<s$, we have $R(s,q)=1$, because of course we can take a single vector in $Z_s^q$, but if we take two then their difference can only 
cover a set of size $q$ and cannot cover $Z_s$.

The next natural question is studying the value of $R(s,s)$.

\begin{prop}
\label{p41}
$R(s,s) \le s$, and $R(s,s) \ge p$ where $p$ is the smallest prime 
factor of $s$.  When $p=2$ this is tight, that is,
if $s$ is even, then $R(s,s)=2$.
\end{prop}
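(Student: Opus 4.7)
The plan is to prove the three claims separately. For the upper bound $R(s,s) \le s$, I will use a short character-theoretic argument. Note first that if $\FF \subseteq Z_s^s$ is a covering family, then for $u \ne v$ in $\FF$ the difference $u-v$ is a length-$s$ vector over $Z_s$ whose coordinates cover $Z_s$, so $u-v$ must be a permutation of $Z_s$. Translate $\FF$ so that the all-zero vector lies in $\FF$; every other element of $\FF$ is then itself a permutation of $Z_s$. Let $\omega = e^{2\pi i/s}$ and map each $v = (v_1,\dots,v_s) \in \FF$ to $\tilde v = (\omega^{v_1},\dots,\omega^{v_s}) \in \mathbb{C}^s$. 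Each $\tilde v$ has squared norm $s$, and for $u \ne v$ in $\FF$ the inner product $\langle \tilde u,\tilde v\rangle = \sum_k \omega^{u_k - v_k}$ equals $\sum_{a \in Z_s} \omega^a = 0$ because $u-v$ takes each value of $Z_s$ exactly once. Hence the $\tilde v$ are pairwise orthogonal vectors in $\mathbb{C}^s$, forcing $|\FF| \le s$.

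For the lower bound $R(s,s) \ge p$, I will write down $p$ explicit vectors. Set $v_i = (0, i, 2i, \ldots, (s-1)i) \in Z_s^s$ for $i = 0, 1, \ldots, p-1$. For any $i \ne j$ in this range $|i-j|$ is a positive integer strictly less than $p$, and since every prime factor of $s$ is at least $p$, $i-j$ is coprime to $s$ and hence a unit in $Z_s$. Therefore $v_i - v_j = (i-j)(0,1,\ldots,s-1)$ is a nonzero scalar multiple of a permutation of $Z_s$, hence itself a permutation, so its coordinates cover $Z_s$ as required.

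Finally, when $s$ is even I will strengthen the upper bound to $R(s,s) \le 2$. Let $\sigma(v) \in Z_s$ denote the sum of the coordinates of $v$. If $u-v$ is a permutation of $Z_s$ then
$$\sigma(u) - \sigma(v) \equiv \sum_{a=0}^{s-1} a = \frac{s(s-1)}{2} \equiv \frac{s}{2} \pmod s,$$
which is nonzero because $s$ is even. If $\FF$ contained three distinct vectors $u,v,w$, then applying the identity to the pairs $(u,v)$ and $(v,w)$ and adding would give $\sigma(u)-\sigma(w) \equiv s \equiv 0$, contradicting $\sigma(u)-\sigma(w) \equiv s/2 \ne 0$. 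Hence $R(s,s) \le 2$, and combined with the lower bound $R(s,s) \ge p = 2$ from the previous paragraph this yields $R(s,s) = 2$ whenever $s$ is even.

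No step appears to pose a serious obstacle. The only mildly creative move is the character-sum argument in the first paragraph; the lower bound and the parity argument are essentially direct computations once the right normalisations are in place.
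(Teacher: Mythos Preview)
Your proof is correct. The lower bound construction and the parity argument for even $s$ are essentially identical to the paper's (the paper phrases the latter with three cyclic differences summing to $3s/2 \equiv s/2 \not\equiv 0$, you use two differences summing to $s \equiv 0$; same idea).

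Your upper bound argument, however, is genuinely different from the paper's. You map $\FF$ into $\mathbb{C}^s$ via $v \mapsto (\omega^{v_1},\ldots,\omega^{v_s})$ and observe that, since every pairwise difference is a permutation of $Z_s$, the resulting vectors are pairwise orthogonal, hence $|\FF| \le s$. (Incidentally, the translation step putting $0 \in \FF$ is harmless but unnecessary: orthogonality follows directly from the differences.) The paper instead gives a one-line pigeonhole: among $s+1$ vectors, two share the same value of $v_1 - v_2 \in Z_s$, and then their difference repeats a coordinate, contradicting that it is a permutation. The paper's argument is more elementary and avoids complex numbers; your character argument is a natural linear-algebraic alternative and fits the spirit of other proofs in the paper (e.g.\ Proposition~\ref{p31}). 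Both are short and either would be acceptable.
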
  

\begin{proof}
	For the lower bound, for each $0 \le a<p$ we have a vector $(0,a,2a, \cdots, (s-1)a)$ reduced modulo $s$.  This is a covering system for $Z_s$, since all positive integers smaller than $p$ are relatively prime to $s$.
	
	For the upper bound, assume there was a covering family in $Z_s^s$ with $s+1$ vectors, so that the difference of any two of these vectors has all values of $Z_s$ exactly once.  
By the pigeonhole principle, there exist 
two vectors $v$ and $w$ so that the difference of 
their first and second coordinates is the same.  But then $v-w$ has the same value in its first and second coordinate, a contradiction.
	
	When $s$ is even, $R(s,s) \ge 2$ as $2$ is the least prime factor of $s$.  If $R(s,s) \ge 3$ then there exist $3$ 
vectors in $\mathbb{Z}_s^s$, $(a_1, \cdots, a_s), (b_1, \cdots, b_s), (c_1, \cdots, c_s)$, which are covering.  
But then $\sum (a_i-b_i) \equiv \sum_{j=0}^{s-1}j \equiv (s-1)\frac{s}{2} \equiv \frac{s}{2}$ modulo $s$, and similarly $\sum (b_i-c_i) \equiv \sum (c_i-a_i) \equiv \frac{s}{2}$ modulo $s$.  Hence, these three sums are all $\frac{s}{2}$ modulo $s$, so they must add up to $\frac{3s}{2} \equiv \frac{s}{2}$ modulo $s$.  But they add up to $0$, so this is a contradiction.
\end{proof}

Nonetheless, the problem of determining $R(s,s)$ for all $s$ 
remains open, and the lower bound in the last proposition
is not tight.  
A computer search gives that $R(15,15) \ge 4$ (\cite{L}).  One example is the $4$ vectors given by the rows of the matrix $$\begin{pmatrix} 
0 & 0 & 0 & 0 & 0 & 0 & 0 & 0 & 0 & 0 & 0 & 0 & 0 & 0 & 0 \\ 
0 & 1 & 2 & 3 & 4 & 5 & 6 & 7 & 8 & 9 & 10 & 11 & 12 & 13 & 14 \\  
0 & 2 & 1 & 5 & 7 & 9 & 12 & 14 & 13 & 3 & 6 & 4 & 10 & 8 & 11 \\
0 & 3 & 9 & 1 & 10 & 14 & 7 & 11 & 4 & 12 & 5 & 8 & 2 & 6 & 13 \end{pmatrix} $$ 
which are covering for $Z_{15}$.

For $q$ which is only a little bigger 
than $s$, we can prove a reasonable upper bound, 
using essentially the same observation applied in the 
proof that $R(s,s) \le s$.

\begin{prop}
	If $2(q-s)^2<s-1$, then $R(s,q) \le s+2(q-s)+2(q-s)^2$. 
\end{prop}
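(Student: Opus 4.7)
Plan: The strategy is to extend the pigeonhole idea of Proposition~\ref{p41} by a double-counting argument that treats all $\binom{q}{2}$ position pairs simultaneously. The key quantitative observation is that for any two distinct vectors $v, w$ in a covering family $\FF \subset Z_s^q$, the difference $v - w$ is a surjection $[q] \to Z_s$; if its fibres have sizes $n_0, \dots, n_{s-1}$ (each at least $1$, summing to $q = s + d$, where $d = q - s$), then the number of unordered position pairs $\{i,j\}$ with $(v-w)_i = (v-w)_j$ equals $\sum_a \binom{n_a}{2}$ and is at most $\binom{d+1}{2}$, attained when one fibre has size $d+1$ and the rest have size $1$.

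Set $N = |\FF|$. The idea is to count in two ways the quantity
\[
T = \big|\{ (\{v, w\}, \{i, j\}) : v, w \in \FF,\ v \ne w,\ i \ne j,\ (v - w)_i = (v - w)_j \}\big|.
\]
Summing over pairs of vectors gives $T \le \binom{N}{2} \binom{d+1}{2}$. Summing over pairs of positions: for each $\{i,j\}$ the map $v \mapsto v_i - v_j$ partitions $\FF$ into classes of sizes $m_a^{ij}$ indexed by $a \in Z_s$ (some possibly zero), and Jensen's inequality applied to the convex function $\binom{\cdot}{2}$ gives $\sum_a \binom{m_a^{ij}}{2} \ge s \binom{N/s}{2} = N(N - s)/(2s)$, so $T \ge \binom{q}{2} N(N - s)/(2s)$. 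Combining the two bounds and cancelling a factor of $N$ simplifies to
\[
(N - s)\, q(q - 1) \le s\,(N - 1)\, d(d + 1),
\]
which, after subtracting $s(N-s)d(d+1)$ from both sides, rearranges to $N - s \le s(s-1)\, d(d+1)/[q(q-1) - s\, d(d+1)]$ whenever the denominator is positive.

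The proof will then conclude by checking that under the hypothesis $2(q-s)^2 < s-1$ (equivalently $2d^2 \le s - 2$), one has $q(q-1) - s\, d(d+1) \ge s(s-1)/2$, so that the previous inequality yields
\[
N - s \le 2\, d(d+1) = 2(q-s) + 2(q-s)^2,
\]
which is the claim. The main (and essentially only) obstacle is this final algebraic check; the hypothesis $2d^2 < s - 1$ is calibrated so that, after using $sd^2 \le s(s-2)/2$, the target inequality reduces to $s/2 + sd + d(d - 1) \ge 0$, which is immediate. The hypothesis is what produces the factor of $2$ in the bound: it forces the denominator $q(q-1) - s\,d(d+1)$ to be at least half of $s(s-1)$, rather than merely positive.
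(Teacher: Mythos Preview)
Your proof is correct and follows essentially the same double-counting argument as the paper: both count pairs $(\{v,w\},\{i,j\})$ with $(v-w)_i=(v-w)_j$, and both bound the row side by $\binom{N}{2}\binom{d+1}{2}$ using that each difference $v-w$ is a surjection onto $Z_s$. The only real difference is on the column side: the paper uses the integer inequality $\sum_a \binom{m_a}{2}\ge N-s$ (which follows since each term $\binom{m}{2}-m\ge -1$), whereas you use the Jensen bound $\sum_a \binom{m_a}{2}\ge N(N-s)/(2s)$; correspondingly, the paper finishes by assuming $r=s+1+2d+2d^2$ and invoking $r<2q$, while you solve directly for $N-s$ and verify the denominator bound $q(q-1)-sd(d+1)\ge s(s-1)/2$. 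These are minor algebraic variations of the same idea.
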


\begin{proof}
	Assume a contradiction for some $q>s$.  
By the definition of $R(s,q)$, for any $r \le R(s,q)$ there exists a matrix of $r$ rows and $q$ columns so that the difference of any two rows contains all values modulo $s$; in particular there exists such a matrix for $r=s+1+2(q-s)+2(q-s)^2$.  
We will double count the number of pairs of rows and columns $r_i, r_j, c_k, c_{\ell}$ with the following property: the $2 \times 2$ submatrix formed by $r_i, r_j, c_k, c_{\ell}$ has sums of opposite corners equal.
	
	For any two rows $r_i, r_j$, we examine their difference.  This is a vector over $Z_s$ with $q$ entries containing all elements modulo $s$, so at most $\dbinom{q-s+1}{2}$ pairs of its entries can be equal.  
This means that for the pair of rows $r_i, r_j$, there exist at most $\dbinom{q-s+1}{2}$ pairs of columns $c_k, c_{\ell}$ that satisfy our property.  
So the total number of such pairs as $1 \le i,j \le r$, $1 \le k, \ell \le q$ range is at most $\dbinom{r}{2}\dbinom{q-s+1}{2}$.  However, if we instead fix $c_k, c_{\ell}$ we see that the difference of these two columns is a vector of length $r$ over $Z_s$ which has at least $r-s$ pairs of equal 
elements. This means that 
there are at least $\dbinom{q}{2}(r-s)$ $r_i, r_j, c_k, c_{\ell}$ with our property.  
Hence 
$$
\dbinom{q}{2}(r-s) \le \dbinom{r}{2}\dbinom{q-s+1}{2}
$$ and so $2q(q-1)(r-s) \le r(r-1)(q-s+1)(q-s).$  
By our assumption, $2q>r$ and so $2(q-1) \ge r-1$.  Hence $2q(q-1)(r-s) \le r(r-1)(q-s+1)(q-s) \le 4q(q-1)(q-s+1)(q-s).$  Thus $r-s \le 2(q-s+1)(q-s)=2(q-s)+2(q-s)^2$ and so $r \le s+2(q-s)+2(q-s)^2$, a contradiction.  \end{proof}
		
Note that this argument gives a bound only when $q=s+O(\sqrt{s})$, 
and the bound is $s+O((q-s)^2)$.  We believe that this is not tight. 
	
There is a nontrivial (though weak) 
bound which holds in the regime $q=s+\omega(\sqrt{s})$ as long as $q<Cs$ 
for some fixed constant $C< \log_2 e$.  
The natural open problem here is to study cases when $q$ is larger but still fairly small, for instance if $q=1.5s$, $q=s\log(s)$, or $q=s^2$.  Furthermore, it would be interesting to figure out how large $q$ has to be so that $R(s,q)>(2-\epsilon)^s$ for a small positive constant $\epsilon$; the following argument shows that if $\epsilon$ is small enough, then $\frac{q}{s}$ must be bigger than an absolute constant which is above $1.44$, but we believe this is far from tight.

\begin{prop} 
If $1 \le \frac{q}{s}<C<\log_2(e)$, 
then $R(s,q) \le (2-\varepsilon)^q$ for $\varepsilon=\varepsilon(C)>0$.
\end{prop}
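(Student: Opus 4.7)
I would argue by contradiction, combining the polynomial bound $R(s,q) \le 2^{q-1}$ from Proposition \ref{p31} with a higher-order generalization of the double-counting used in the previous proposition. Assume $\mathcal{F} \subset Z_s^q$ is a covering family of size $r > (2-\varepsilon)^q$ with $q \le Cs$ for some fixed $C < \log_2 e$, aiming for a contradiction once $\varepsilon = \varepsilon(C) > 0$ is small enough.

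\emph{Step 1 (higher-order double count).} For a parameter $t \ge 2$, count pairs $(\{i,i'\}, \{j_1, \ldots, j_t\})$ of a row-pair and a column $t$-tuple on which the difference $r_i - r_{i'}$ takes the same value in all $t$ columns. Since $r_i - r_{i'}$ is a $Z_s$-covering vector, it takes each value in $Z_s$ at most $q-s+1$ times, so every row pair contributes at most $\binom{q-s+1}{t}$ such tuples. Conversely, since the relative profile of $\mathcal{F}$ on any $t$ columns lies in $Z_s^{t-1}$ (only $s^{t-1}$ values), a pigeonhole yields at least $r - s^{t-1}$ matching row pairs per column $t$-tuple. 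Hence
\[
\binom{q}{t}(r - s^{t-1}) \;\le\; \binom{r}{2} \binom{q-s+1}{t}.
\]

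\emph{Step 2 (optimize $t$).} Using $\binom{q-s+1}{t}/\binom{q}{t} \le (1 - (s-1)/q)^t \le e^{-t(s-1)/q}$, the inequality reduces to $r - s^{t-1} \le (r^2/2)\,e^{-t(s-1)/q}$. To extract a nontrivial bound, $t$ must be chosen so that $(r/2)e^{-t(s-1)/q}$ is $O(1)$, i.e.\ $t \gtrsim q \ln(r/2)/(s-1)$. At this threshold the bound becomes $r \lesssim s^{t-1}$, which on its own is only sub-exponential in $q$.

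\emph{Step 3 (combine with $2^{q-1}$).} Applying Proposition \ref{p31} bounds $r \le 2^{q-1}$, which caps the required $t$ at roughly $q^2\ln 2/(s-1)$. Substituting $q = Cs$ and $r = (2-\varepsilon)^q$ into $r \le s^{t-1}$ and taking logarithms leads, after simplification, to a constraint that in the limit $\varepsilon \to 0$ reads $C \le 1/\ln 2 = \log_2 e$. Hence for any fixed $C < \log_2 e$ one can take $\varepsilon = \varepsilon(C) > 0$ small enough that the resulting inequality contradicts $r > (2-\varepsilon)^q$.

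\emph{Main obstacle.} The hardest step is Step 3: the bare combinatorial inequality yields only a sub-exponential (polynomial-in-$s^{t-1}$) bound, while the polynomial method gives the exponential bound $2^{q-1}$. Extracting the exponentially-improved bound $(2-\varepsilon)^q$ requires carefully balancing these competing estimates, and it is precisely in this balance that the threshold $\log_2 e$ emerges. The non-tightness of the resulting $\varepsilon$ is a consequence of this delicate interplay, consistent with the paper's remark that the bound is ``nontrivial (though weak)''.
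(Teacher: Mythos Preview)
Your Steps 1 and 2 are set up correctly, but Step 3 does not go through, and in fact the higher-order double count cannot yield the bound $(2-\varepsilon)^q$ at all. The inequality you obtain,
\[
r - s^{t-1} \;\le\; \tfrac{r^2}{2}\,\frac{\binom{q-s+1}{t}}{\binom{q}{t}},
\]
is a \emph{lower} bound on $r$ whenever $s^{t-1}<r/2$, and is vacuous (left side negative) otherwise. With $q=Cs$, any $t$ that grows at least like $q/\log s$ already makes $s^{t-1}\gg 2^q\ge r$, so the inequality carries no information; for smaller $t$ it only says $r\gtrsim (C/(C-1))^t$, which never contradicts $r\le 2^{q-1}$. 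Your value $t\approx q^2\ln 2/(s-1)\approx C^2 s\ln 2$ is in the vacuous range (and typically exceeds $q-s+1$, so $\binom{q-s+1}{t}=0$ and the conclusion is just $r\le s^{t-1}=e^{\Theta(s\log s)}$, far weaker than $2^q$). What you have actually derived in Step 3 is the \emph{feasibility} condition for your own method: in order to find a $t\le q$ with $(r/2)e^{-t(s-1)/q}=O(1)$ one needs $\ln(r/2)\le s-1$, i.e.\ $r\lesssim e^{s}$. But this is a hypothesis your argument requires, not a consequence of it; when $r>e^{s}$ the method simply gives nothing. So the appearance of $\log_2 e$ here is circular.

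The paper's proof uses a completely different mechanism. One applies to each column an independent uniform random shift in $Z_s$ and then reduces every entry modulo~$2$. For a pair of rows whose difference (a covering vector) has entries $k_1,\ldots,k_q$, the probability that the two binary images agree in all but at most $2cs$ coordinates is controlled by $\prod_i p_{k_i}$, where $p_k$ is the probability that a random $x$ and $x+k$ have the same parity after reduction; since every residue occurs among the $k_i$, $\prod_{k\in Z_s} p_k = e^{-s(1+o(1))}$ by Stirling, and this forces the ``near-collision'' probability to be at most $e^{-s(1-\varepsilon')}$. One then removes the few near-colliding pairs and observes that what remains is a binary code of minimum distance $>2cs$, hence of size at most $(2-\delta)^q$ by a Hamming-ball packing. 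Balancing $e^{-s(1-\varepsilon')}$ against $(2-\delta)^q$ is exactly where the threshold $q/s<\log_2 e$ enters. None of this is visible from pure double counting; the random shift and the mod~$2$ projection are the essential missing ideas.
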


\begin{proof}
Say that $R(s,q) \ge r$ so there are $r$ vectors in $Z_s^q$ which are covering.  Place them in a matrix with $r$ rows and $q$ columns.  Now, for each column select uniformly an element of $Z_s$, and add it to all the elements of that column, reducing modulo $s$.  This does not change the covering property.  	
After having done this, replace all entries of this matrix by their reduction modulo $2$.  Given two entries of a matrix in the same column and different rows, if initially they differed by $k$ modulo $s$ they now have some probability $p_k$ of being equal that depends only on $k$, 
which is the probability that if $x \in Z_s$ is chosen randomly 
and uniformly 
then the reductions of $x,x+k$ modulo $s$ have the same parity.  One can easily check that $p_0=1$, $p_1=1/s$, $p_2=(s-2)/s$, and so on, 
so that $\prod_{i \in Z_s}p_i=e^{-s(1+o(1))}$ by Stirling's formula.  
Furthermore, if we take this product over all $i \in Z_s$ except 
for a subset of size 
$2cs$ for a small constant $c$, we have a bound of $e^{-s(1-\epsilon)}$, where $\epsilon$ is a small positive constant that tends to $0$ with $c$.

Let $K=cs$.  For any pair of rows, the probability their colors match in all but at most $2K$ places is at most 
$e^{-s(1-\epsilon')}$ for some $\epsilon'$ that tends to $0$ with $c$, 
by a union bound over all 
the at most $2\dbinom{q}{2K}$ 
possible sets of places where they do not match.  Thus if we define
a graph on our $r$ vectors where two are adjacent iff their values 
upon reduction modulo $2$ match in all but at most $2K$ places, 
this graph will in expectation have edge density $e^{-s(1-\epsilon')}$ 
and thus for some fixed choice of the random shifts
will have at most $r^2e^{-s(1-\epsilon')}$ edges.  Fixing these shifts
one can remove less than 
$r^2e^{-s(1-\epsilon')}$ vertices from this graph and get an 
independent set. But this set
must be of size at most $(2-\delta)^q$ for some $\delta=\delta(c) > 0$,
because this size is bounded by the cardinality 
of a family of disjoint Hamming balls in $\{0,1\}^q$, each of 
radius $K=cs$.  Thus we have $r \le r^2e^{-s(1-\epsilon')}+(2-\delta)^q$, 
and the same inequality holds for any $r'<r$ by applying the same
reasoning to a set of $r'$ of our vectors.

Now if $q<Cs$ for some fixed  $C< \log_2 e$ then setting $q > q_0(\epsilon',C)$, $r'=3 (2-\delta)^q$ violates the 
last inequality (with $r$ replaced by $r'$) since for this value
of $r'$ and large $q$, $(r')^2 e^{-s(1-\epsilon')}<r'/2$ and
$(2-\delta)^q <r'/2$ so their sum is smaller than $r'$.
This establishes the assertion of the proposition.
\end{proof}

On the opposite extreme, one may ask 
what happens if $s \ge 3$ is a small fixed positive integer 
and $q$ grows.  We know by Proposition~\ref{p31} that $R(2,q)=2^{q-1}$, 
so it is natural to ask what happens when $2$ is replaced by 
a larger positive integer.

\begin{conj}
For any fixed $s \ge 3$, $R(s,q)=o(2^q)$, 
and furthermore $R(s,q)=\Theta(2^q/q^{c})$ where 
$c=c(s)>0$ is a constant that depends only on $s$.
\end{conj}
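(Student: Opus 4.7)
The plan attacks the two halves of the conjecture separately; both require substantial quantitative sharpening of the tools developed earlier in the paper, and neither is routine.

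\textbf{Upper bound $R(s,q)=o(2^q)$.} My first attempt is to extend the polynomial method of Proposition~\ref{p31}. Fix an odd prime $p$ dividing $s$ (the case $s=2^k$ is genuinely harder and would require a $2$-adic adaptation), reduce $\FF$ to $Z_p^q$, and for each $v_i\in\FF$ and each $a\in Z_p$ define $P_i^{(a)}(x)=\prod_{j=1}^{q}(x_j-v_{ij}-a)$. The covering hypothesis forces $P_{i'}^{(a)}(v_i+bJ)=0$ whenever $i\neq i'$, while the diagonal evaluations $P_i^{(a)}(v_i+bJ)=(b-a)^q$ form a $p\times p$ circulant-like matrix with zero diagonal. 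When this matrix has full rank over $Z_p$ (which can be arranged by a harmless adjustment of $q$ modulo $p-1$, since for $q\equiv 0\pmod{p-1}$ the matrix becomes $J_{p\times p}-I_{p\times p}$, invertible over $Z_p$ for odd $p$), the resulting $pm$ polynomials are linearly independent in the $2^q$-dimensional multilinear space, yielding $R(s,q)\leq 2^q/p$. That is only a constant-factor saving---not yet $o(2^q)$. To go further I would add supplementary polynomials encoding the covering of triples and longer tuples, for instance symmetric combinations of $(x_j-v_{ij}-a)(x_j-v_{ij}-b)$ reduced modulo $x_j^2=x_j$, and argue that their joint linear span grows strictly faster than $pm$ by an amount that itself grows with $q$.

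\textbf{Polynomial factor $R(s,q)=O(2^q/q^c)$.} Upgrading the above constant-factor gain to the sharp $q^{-c}$ gain is where I expect the main obstacle to lie. The products $P_i^{(a)}$ all live in the image of the $q$-parameter map $(c_1,\dots,c_q)\mapsto\prod_j(x_j-c_j)$ into the multilinear space; when the tuples $(c_1,\dots,c_q)$ are constrained to come from a covering family, this image should lie in a subspace of codimension $q^{c(s)}$ for an appropriate constant $c(s)>0$. Identifying this subspace---via either a generic bilinear pairing designed to annihilate most such products, or a Sperner-capacity-style invariant of the digraph on $Z_s$ recording the covering constraints, in the framework of Gargano, K\"orner and Vaccaro \cite{GKV}---is precisely where the constant $c(s)$ should emerge.

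\textbf{Lower bound $R(s,q)=\Omega(2^q/q^c)$.} The amplification scheme of Section~3.2 loses a superpolynomial factor, so for the matching lower bound I would replace iteration by a one-shot construction: start from the $2^{q-1}$ vectors in $\{0,1\}^q$ with a prescribed first coordinate, and append to each vector $k=O(q^{c'})$ ``anchor'' coordinates drawn from a short covering family over $Z_s^{O(\log q)}$ supplied by Theorem~\ref{t13}~part~2. If the anchors are distributed so that any two vectors differ on at least one anchor block, the covering property of the short family forces the difference to hit every nonzero residue of $Z_s$, while the binary part of the difference supplies $0$ (or vice versa). A probabilistic deletion step removes the sporadic pairs whose anchors collide, leaving a family of size $2^{q-O(q^{c'}\log q)}=\Omega(2^q/q^{O(c')})$. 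Tuning $k$ and balancing $c$ against $c'$ so that the upper and lower exponents actually match is the final delicate step, and I suspect it requires a more careful Sperner-capacity calculation than I have sketched here.
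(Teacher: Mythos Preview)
The statement you are trying to prove is labelled a \emph{Conjecture} in the paper, not a theorem: the authors explicitly do not prove it, and indeed remark just below it that ``we do not even know how to prove the weaker claim that a $\{-1,0,1\}$ covering system \ldots\ must have size $o(2^q)$.'' So there is no ``paper's own proof'' to compare against; your proposal should be read as an attack on an open problem.

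Assessed on its own merits, your plan has real gaps. On the upper-bound side, your extension of Proposition~\ref{p31} to all $p$ shifts is correct as far as it goes: the $p\times p$ circulant $((b-a)^q)_{a,b}$ is indeed $J-I$ when $q\equiv 0\pmod{p-1}$ and is invertible over $Z_p$ for odd $p$, so you legitimately obtain $R(s,q)\le 2^q/p$ whenever $s$ has an odd prime factor $p$. But this is still only a constant-factor saving, and your proposal for pushing it to $o(2^q)$ (``add supplementary polynomials encoding the covering of triples'') is not an argument---you have not exhibited any additional polynomial that is provably independent of the $pm$ you already have, nor indicated why the count of such polynomials should grow with $q$. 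The paper's authors, who developed exactly this polynomial framework, flag precisely this step as open; your sketch does not get past it. The same applies a fortiori to the sharper $O(2^q/q^c)$ claim, where you only gesture at a ``subspace of codimension $q^{c(s)}$'' without identifying it.

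Your lower-bound sketch contains a concrete arithmetic error: the displayed identity $2^{q-O(q^{c'}\log q)}=\Omega(2^q/q^{O(c')})$ is false. The left side equals $2^q/q^{\Theta(q^{c'})}$, which for any $c'>0$ is sub-polynomial in $2^q/q^C$ for every constant $C$; appending $\Theta(q^{c'}\log q)$ anchor coordinates therefore cannot yield a family of the claimed size. (For contrast, the paper obtains $R(3,q)=\Omega(2^q/\sqrt{q})$ by a direct one-line construction with no appended coordinates at all.) In short, neither direction of the conjecture is established by your plan.
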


Note that the vectors over $Z_3^q$ with a zero in the first coordinate and with exactly $\lfloor q/2 \rfloor$ ones and $\lceil q/2 \rceil$ zeroes are a covering system, so $R(3,q)=\Omega(2^q/\sqrt{q})$.  
When $s$ is odd the best upper bound known for $R(s,q)$ 
is $O(2^{q})$, as shown in Proposition \ref{p31}.  
When $s$ is even, if the vectors of a covering system for $Z_s^q$ are reduced modulo $2$, any two differ in at least $s/2$ places, 
so there are at most $O(2^q/q^{\lfloor \frac{s-1}{4} \rfloor})$ of them, 
as the Hamming balls of radius $\lfloor \frac{s-1}{4} \rfloor$ 
centered at these reduced vectors are pairwise disjoint.
It would be interesting to establish the above conjecture and to find the relevant constants $c$ if they indeed exist.  
In particular it seems plausible that 
$c=1/2$ when $s=3$, and if so then the lower bound for $R(3,q)$ is 
essentially optimal.  We conjecture that this is indeed the case.

\begin{conj}
	$R(3,q)=\Theta(2^q/\sqrt{q}).$
\end{conj}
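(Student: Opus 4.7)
Target: improve the existing bound $R(3,q) \le 2^{q-1}$ (which follows from Proposition~\ref{p31} with $s=3$) to the matching upper bound $R(3,q) \le O(2^q/\sqrt{q})$; the lower bound noted just before the conjecture already has the right asymptotic form. The starting observation is that $\{0,1\}$-vectors behave especially rigidly: for $u,v\in\{0,1\}^q$ the difference $u-v\in\{-1,0,1\}^q$ covers $\{1,2\}=Z_3\setminus\{0\}$ modulo $3$ if and only if neither of $u,v$, viewed as subsets of $[q]$, contains the other. So any $\{0,1\}$-valued covering family is a Sperner antichain, and Sperner's theorem gives the matching bound directly. The whole plan is to reduce the general $Z_3$-covering problem to this Sperner-antichain bound.

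First I would partition $\mathcal{F}$ by the pattern of $1$'s. Define $\chi\colon\mathcal{F}\to\{0,1\}^q$ by $\chi(v)_i=1$ iff $v_i=1$, and set $\mathcal{F}_\chi=\{v\in\mathcal{F}:\chi(v)=\chi\}$. On each level set, all vectors equal $1$ at the coordinates with $\chi_i=1$ and take values in $\{0,2\}$ at the remaining coordinates $Z=\{i:\chi_i=0\}$. Identifying $v\in\mathcal{F}_\chi$ with the set $U(v)=\{i\in Z:v_i=2\}$, one checks directly that the $Z_3$-covering condition on a pair $u,v\in\mathcal{F}_\chi$ is exactly that $U(u)$ and $U(v)$ be incomparable subsets of $Z$. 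Sperner's theorem then yields
$$|\mathcal{F}_\chi|\le\binom{|Z|}{\lfloor|Z|/2\rfloor}.$$

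The hard part will be controlling the sum. The naive estimate $|\mathcal{F}|\le\sum_k\binom{q}{k}\binom{q-k}{\lfloor(q-k)/2\rfloor}$ equals $\Theta(3^q/\sqrt{q})$, which is off by a factor $(3/2)^q$; this is the main obstacle. To close the gap I would try to exploit the covering condition across different level sets: for $u\in\mathcal{F}_{\chi_1}$ and $v\in\mathcal{F}_{\chi_2}$ with $\chi_1\ne\chi_2$, any coordinate where exactly one of $\chi_1,\chi_2$ equals $1$ already contributes a $1$ or a $2$ to $u-v$, partially discharging the covering requirement. If formalized correctly this should force the set of $\chi$'s carrying most of the weight to themselves lie in a Sperner-antichain-like configuration inside $\{0,1\}^q$, collapsing the outer sum to an effectively single-weight one and yielding the target bound. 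If this combinatorial amplification proves elusive, a backup route is a Delsarte-style linear-programming bound on the Cayley graph of $Z_3^q$ with connection set $(\{0,1\}^q\cup\{0,2\}^q)\setminus\{0\}$, whose eigenvalues are $2\,\mathrm{Re}\prod_i(1+\omega^{\xi_i})-2$ with $\omega=e^{2\pi i/3}$; the permutation symmetry of coordinates reduces this to a polynomial LP indexed by the types of $\xi$, and analogous LPs for binary codes are known to deliver $\sqrt{q}$-type savings, though constructing an explicit dual witness tailored to this connection set is the principal difficulty.
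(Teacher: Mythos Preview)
The statement you are attacking is a \emph{conjecture}: the paper does not prove it, and immediately after stating it the authors record that the best known upper bounds are $(\tfrac12+o(1))2^q$ for even $q$ and $(\tfrac13+o(1))2^q$ for odd $q$ (from \cite{Ca}), and that they ``do not even know how to prove the weaker claim'' that a $\{\pm 1\}$-covering family over $\mathbb{Z}$ has size $o(2^q)$. There is therefore no proof in the paper to compare your proposal against; what you have written is an outline of an attack on an open problem.

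On the substance of your outline: the Sperner step inside a single fibre $\mathcal{F}_\chi$ is correct and indeed explains the structure of the lower-bound example. The genuine gap is exactly where you place it, and neither of your two suggested routes closes it. Your first heuristic, that the support patterns $\chi$ occurring in a large covering family should themselves be confined to an antichain-like set, is not forced by the covering condition. For instance, with $q=3$ take $u=(1,2,2)$ and $v=(1,1,0)$; then $\chi(u)=(1,0,0)\subsetneq(1,1,0)=\chi(v)$, yet $u-v=(0,1,2)$ and $v-u=(0,2,1)$ are both $Z_3$-covering. More generally, whenever $\chi_1\subsetneq\chi_2$ with $\chi_1\neq 0$, any coordinate in $\chi_1$ forces a $0$ in the difference, and coordinates in $\chi_2\setminus\chi_1$ contribute values in $\{1,2\}$ determined by the $\{0,2\}$-entries of the vector on the $\chi_1$ side; it is easy to arrange both $1$ and $2$ there. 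So comparability of the patterns imposes essentially no obstruction, and there is no evident mechanism that collapses the outer sum from $\Theta(3^q/\sqrt{q})$ down to $\Theta(2^q/\sqrt{q})$.

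Your backup Delsarte/LP route is reasonable in spirit, and your eigenvalue computation for the Cayley graph with connection set $(\{0,1\}^q\cup\{0,2\}^q)\setminus\{0\}$ is correct, but you supply no dual witness, and producing one that beats $c\cdot 2^q$ for every fixed $c>0$ would already resolve the explicitly open $o(2^q)$ question. As it stands, the proposal is an honest plan whose decisive step remains unproved.
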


In \cite{Ca} it is shown that $R(3,q) \le (\frac{1}{2}+o(1))2^q$ 
when $q$ is even and $R(3,q) \le (\frac{1}{3}+o(1))2^q$ when $q$ 
is odd, leaving this problem open.  Note that we do not even know 
how to prove the weaker claim that a $\{-1,0,1\}$ covering system 
(or equivalently just a $\{\pm 1\}$ covering system) 
over $Z$ must have size $o(2^q)$.
\vspace{0.2cm}

\noindent
Our original motivation for studying the function $R(s,q)$ here is
its connection to the product dimension $Q(s,r)$ of the disjoint union of
$r$ cliques, each of size $s$.  This connection is described in Proposition
\ref{p12}. The results here suffice to determine the asymptotic behaviour
of $Q(s,r)$ for every fixed $s$ as $r$ tends to infinity, but do not provide
tight bounds when $r$ is not much bigger than $s$. 
We conclude this short paper with several simple comments about this range
of the parameters. The first remark is that the product dimension
$Q(s,r)$ is at least $s$ for every $r \geq 2$. To see this fix a 
vertex $u$ of the first clique and observe that in every proper coloring of
the graph $K_s(r)$ of $r$ disjoint cliques, each of size $s$, there is
at most one pair $uv$ with a vertex $v$ of the second clique so that $u$
and $v$ have the same color.  As altogether there  are $s$ such pairs, 
and each one has to be monochromatic in at least one vertex coloring 
in a collection exhibiting an upper bound for the product dimension, the
number of such colorings is at least $s$.  Another comment is that
$Q(s,r)$ is clearly monotone non-decreasing in both $r$ and $s$. Therefore,
for every $r,s \geq 2$, $Q(s,r) \geq Q(2,r) = \lceil \log_2(2r) \rceil$.

A {\em transversal design} $TD(r,s)$ of order $s$ and block size $r$ (with 
multiplicity $\lambda=1$) is a set $V$ of $sr$ elements partitioned 
into $r$ pairwise disjoint groups, each of size $s$, and a collection
of blocks, each containing exactly one element of each group, so that
every pair of elements from distinct groups is contained in exactly
one block. A transeversal design is {\em resolvable}
if its blocks can be partitioned
into parallel classes where the blocks in any parallel class 
partition the set $V$.
There is a substantial amount of literature about transversal
designs, see \cite{CD}. It is not difficult to check that 
$Q(s,r)=s$ if and only if a resolvable 
$TD(r,s)$ exists and hence the known results
about resolvable 
transversal designs supply nearly precise information for the range
$r \leq s$ (it is easy to see that such a design cannot exist for 
$r>s$).  In particular, for every prime power $s$, $Q(s,s)=s$
and therefore  by the obvious monotonicity, for any prime power $s$,
$Q(s,r)=s$ for every $2 \leq r \leq s$, and if $p$ is a prime power then for
every $s,r \leq p$, $Q(s,r) \leq p$. (Note that 
Propositions \ref{p12} and \ref{p41}  also imply that 
$Q(s,s)=s$ when $s$ is
a prime.)

It is not difficult to prove that for every $s,r_1,r_2$,
\begin{equation}
\label{e999}
Q(s,r_1r_2) \leq Q(s,r_1)+Q(s,r_2). 
\end{equation}
Indeed, given the graph
$K_s(r_1r_2)$ consisting of $r_1r_2$ disjoint cliques, each of size
$s$, we can split the cliques into $r_1$ disjoint groups, each consisting
of $r_2$ cliques. Define $Q(s,r_1)$ 
proper colorings in which the cliques
in every group are colored the same, based on the system of colorings that
shows that the product dimension of $K_s(r_1)$ is $Q(s,r_1)$. Add to these
$Q(s,r_2)$ additional colorings, whose restrictions to the $r_2$ cliques
in each group are exactly the colorings showing that the product dimension
of $K_s(r_2)$ is $Q(s,r_2)$. The resulting $Q(s,r_1)+Q(s,r_2)$ colorings
establish (\ref{e999}).

The above comments together with the results in the previous sections 
provide upper and lower bounds for $Q(s,r)$ for all
$s$ and $r$, but these bounds are quite far from each other when
$r$ is much bigger than $s$ but much smaller than $2^{s^{3 \log \log s}}$. 
In particular, for $r=2^s$ the bounds we have are
$$
s \leq Q(s,2^s) \leq (1+o(1)) \frac{s^2}{\log s}.
$$
It would be interesting to close this gap.  
\vspace{0.2cm}

\noindent
{\bf Acknowledgment}
We thank J\'anos K\"orner, Robert Kleinberg and Matt Weinberg  
for helpful comments. We also thank two anonymous referees for helpful suggestions.

\end{document}